\newtheorem{theorem}{Theorem}[section]
\newtheorem{lemma}[theorem]{Lemma}
\theoremstyle{definition}
\newtheorem{definition}[theorem]{Definition}
\newtheorem{example}[theorem]{Example}
\newtheorem{xca}[theorem]{Exercise}
\theoremstyle{remark}
\newtheorem{remark}[theorem]{Remark}
\numberwithin{equation}{section}
\newcommand{\abs}[1]{\lvert#1\rvert}
\newcommand{\blankbox}[2]{%
  \parbox{\columnwidth}{\centering
    \setlength{\fboxsep}{0pt}%
    \fbox{\raisebox{0pt}[#2]{\hspace{#1}}}%
  }%
}
\newcounter{cs}
\newcounter{ds}
\newcommand{\casos}{\begin{itemize}}
\newcommand{\fcasos}{\end{itemize}\setcounter{cs}{1}}
\newcommand{\ol}{\overline}
\newcommand{\Si }{{\rm Sink}}
\newcommand{\Path }{{\rm Path}}
\DeclareMathOperator{\rL}{L}
\newtheorem{lem}{Lemma}[section]
\newtheorem{theor}[lem]{Theorem}
\newtheorem{prop}[lem]{Proposition}
\theoremstyle{definition}
\newtheorem{defi}[lem]{Definition}
\newcommand{\Z}{\mathbb{Z}}
\newcommand{\Ifree}{I_{\mathrm{free}}}
\newcommand{\Ireg}{I_{\mathrm{reg}}}
     \newcommand{\mon}[1]{\mathcal{V}(#1)}              
     \newcommand{\cb}[0]{K}                             
\begin{document}
\title[Leavitt path algebras over a poset of fields]{Leavitt path algebras over a poset of fields}
\author{Pere Ara}\address{Departament de Matem\`atiques, Universitat Aut\`onoma de
Barcelona, 08193, Bellaterra (Barcelona),
Spain}\email{para@mat.uab.es, mbrusten@mat.uab.es}

\thanks{Partially supported by
DGI-MINECO-FEDER through the grant MTM2017-83487-P and by the Generalitat de Catalunya through the grant 2017-SGR-1725}
\subjclass[2000]{Primary 16D70; Secondary 06A12, 06F05, 46L80}
\keywords{von Neumann regular ring, path algebra, Leavitt path
algebra, universal localization}
\date{\today}

\begin{abstract}
Let $E$ be a finite directed graph, and let $I$ be the poset obtained as the antisymmetrization of its set of vertices with respect to a pre-order $\le$ that satisfies 
$v\le w$ whenever there exists a directed path from $w$ to $v$. 
Assuming that $I$ is a tree, we define a poset of fields over $I$ as a family
$\mathbf K = \{ K_i :i\in I \}$ of fields $K_i$ such that $K_i\subseteq K_j$ if $j\le i$. We define the concepts of a Leavitt path algebra $L_{\mathbf K} (E)$ 
and a regular algebra $Q_{\mathbf K}(E)$ over the poset of fields $\mathbf K$, and we show that $Q_{\mathbf K}(E)$ is a hereditary von Neumann regular ring, and that its monoid 
$\mon{Q_{\mathbf K}(E)}$ of isomorphism classes of finitely generated projective modules is canonically isomorphic to the graph monoid $M(E)$ of $E$.  
\end{abstract}
\maketitle

\section*{Introduction}

The work in the present paper is instrumental for the constructions
developed in \cite{ABP} (see also \cite{ABPS}). The main result in \cite{ABP} states that for every finitely generated conical
refinement monoid $M$ and every field $K$ there exists a von Neumann regular $K$-algebra $R$ such that $\mon{R}\cong M$.
The algebra $R$ is a certain universal localization of a precisely defined {\it Steinberg algebra}. The class of Steinberg algebras has been 
intensively studied in the last few years, see for instance the recent survey \cite{Rigby} and its references. It includes as prominent 
examples Leavitt path algebras of arbitrary graphs. For the proof of the main result of \cite{ABP}, one needs to interpret the algebra $R$
as an algebra obtained using certain building blocks, using constructions which were partially introduced in \cite{Atams} and fully developed in
\cite{ABP}. The purpose of this paper is to introduce these building blocks in a general way, and to show their basic properties. These algebras generalize the
main construction in \cite{AB} of the regular algebra $Q_K(E)$ of a finite directed graph $E$. The main features of the $K$-algebra $Q_K(E)$ are that it is von Neumann regular 
and that $\mon{Q_K(E)}\cong M(E)$, where $M(E)$ is the {\it graph monoid} of $E$ (see Section  \ref{sect:results} for the definition of the graph monoid).

This is generalized here as follows. 
Let $E$ be a finite graph and let $\le $ be a pre-order on $I$ such that $v\le w$ whenever there exists a directed path from $w$ to $v$.   
Let $I=E^0/{\sim}$ be the antisymmetrization of $E^0$ with respect to the pre-order $\le$, endowed with its canonical partial order, also denoted by $\le$.   
We assume throughout that $I$ is a {\it tree}, that is, that there exists a maximum element $i_0$ in $I$ and that for each $i\in I$ the interval 
$[i,i_0]$ is a chain. Under these assumptions, we define a poset of fields $\mathbf K = \{ K_i : i\in I\}$ as a collection of fields $K_i$, for $i\in I$, such that $K_i \subseteq K_j$ whenever $j\le i$.
The usual Leavitt path algebra $L_K(E)$ and regular algebra $Q_K(E)$ over a field $K$ are here generalized to a Leavitt path algebra $L_{\mathbf K}(E)$ and a regular algebra $Q_{\mathbf K}(E)$  
with coefficients in the given poset of fields $\mathbf K$. The main results state that $Q_{\mathbf K}(E)$ is a hereditary von Neumann regular ring and that $\mon{Q_{\mathbf K}(E)}\cong M(E)$ canonically.   
Our techniques here extend the ones introduced in \cite{ABmixed}, where a more restricted scope was used to study algebras over an ordered finite sequence of fields.

For more information on the realization problem we refer the reader to \cite{AB}, \cite{Atams}, \cite{AG17}, and to the survey papers \cite{directsum}, \cite{Areal}.

\section{Preliminary definitions}
\label{sect:preldefs}

All rings in this paper will be associative and all monoids will be
abelian.  A (not necessarily unital) ring $R$ is {\it von Neumann
regular} if for every $a\in R$ there is $b\in R$ such that $a=aba$.
Our basic reference for the theory of von Neumann regular rings is
\cite{vnrr}.

For a unital ring $R$, the monoid $\mon{R}$ is
the monoid of isomorphism classes of finitely generated projective
left $R$-modules, where the operation is induced by direct sum. 
If $R$ is an exchange ring (in particular, if $R$ is von Neumann
regular), then $\mon{R}$ is a conical refinement monoid, see
\cite[Corollary 1.3]{AGOP}.

Say that a subset $A$ of a poset $I$ is a {\it lower
subset} in case $q\le p$ and $p\in A$ imply $q\in A$. 
We use $I\downarrow i$ to denote the lower subset of $I$ consisting of all the elements
$j$ such that $j\le i$.

For an element $p$ of a poset $I$, write
$$\rL(p)=\rL(I ,p)=\{q\in I : q<p \text{ and } [q,p]=\{q,p\}\}.$$
An element of $\rL (p)$ is called a {\it lower cover} of $p$.

In the following, $\cb$ will denote a field and
$E=(E^0,E^1,r,s)$ a finite quiver (directed graph). 
Here $s(e)$ is the {\em source vertex} of
the arrow $e$, and $r(e)$ is the {\em range vertex} of $e$. A {\em
path} in $E$ is either an ordered sequence of arrows
$\alpha=e_1\dotsb e_n$ with $r(e_t)=s(e_{t+1})$ for $1\leq
t<n$, or a path of length $0$ corresponding to a vertex $v\in
E^0$. The paths of length $0$ are called trivial paths. A non-trivial path
$\alpha=e_1\dotsb e_n$ has length $n$ and we define
$s(\alpha)=s(e_1)$ and $r(\alpha)=r(e_n)$. We will denote the
length of a path $\alpha$ by $|\alpha|$, the set of all paths of
length $n$ by $E^n$, for $n>1$, and the set of all paths by $\text{Path} (E)$.

For $v,w\in E^0$, set $v\ge w$ in case there is a (directed) path
from $v$ to $w$. A subset $H$ of $E^0$ is called \emph{hereditary}
if $v\ge w$ and $v\in H$ imply $w\in H$. 

Let us recall the construction from \cite{AB} of the regular
algebra $Q_K(E)$ of a quiver $E$, although we will follow the
presentation in \cite{Atams} rather than the used in \cite{AB}.
That is, relations (CK1) and (CK2) below are reversed with respect
to their counterparts in \cite{AB}, so that we are led to work
primarily with {\it left} modules instead of right modules.
Therefore we recall the basic features of the regular algebra
$Q_K(E)$ in terms of the notation used here. We will only need
finite quivers in the present paper, so we restrict attention to
them.  The algebra $Q_K(E)$ fits into the following
commutative diagram of injective algebra morphisms:

\begin{equation}\notag \begin{CD}
K^{E^0} @>>> P_K(E) @>{\iota _{\Sigma}}>> P^{{\rm rat}}_K (E) @>>> P_K((E))\\
@VVV @V{\iota _{\Sigma_1}}VV @V{\iota _{\Sigma _1}}VV @V{\iota _{\Sigma _1}}VV\\
P_K(E^*) @>>> L_K(E) @>{\iota _{\Sigma}}>> Q_K(E) @>>> U_K(E) .
\end{CD} \notag \end{equation}
Here $P_K(E)$ is the path $K$-algebra of $E$, $E^*$ denotes the
inverse quiver of $E$, that is, the quiver obtained by reversing the
orientation of all the arrows in $E$, $P_K((E))$ is the algebra of
formal power series on $E$, and $P^{\rm{rat}}_K(E)$ is the algebra of rational
series, which is by definition the division closure of $P_K(E)$ in
$P_K((E))$ (which agrees with the rational closure \cite[Observation
1.18]{AB}). The maps $\iota_{\Sigma}$ and $\iota _{\Sigma_1}$
indicate universal localizations with respect to the sets $\Sigma$
and $\Sigma _1$ respectively. (We refer the reader to \cite{free} and \cite{scho} 
for the general theory of universal localization.)
Here $\Sigma$ is the set of all square
matrices over $P_K(E)$ that are sent to invertible matrices by the
augmentation map $\epsilon \colon P_K(E)\to K^{E^0}$. By
\cite[Theorem 1.20]{AB}, the algebra $P^{\rm{rat}}(E)$ coincides with the
universal localization $P_K(E)\Sigma ^{-1}$. The set
$\Sigma_1=\{\mu_v\mid v\in E^0,\,s^{-1}(v)\neq \emptyset\}$ is the
set of morphisms between finitely generated projective left
$P_K(E)$-modules defined by
 \begin{align*}
  \mu_v\colon P_K(E)v&\longrightarrow \bigoplus_{i=1}^{n_v}P_K(E)r(e^v_i)\\
  r&\longmapsto\left(re^v_1,\dotsc,re^v_{n_v}\right)
 \end{align*}
for any $v\in E^0$ such that $s^{-1}(v)\neq\emptyset$, where $s^{-1}(v)=\{ e_1^v,\dots , e_{n_v}^v \}$.
By a slight abuse of notation, we use also $\mu _v$ to denote the corresponding
maps between finitely generated projective left $P^{\rm{rat}}_K(E)$-modules and $P_K((E))$-modules
respectively.  

The following relations hold in $Q_K(E)$:

(V)\, \, \, \,\hskip.35cm $vv'=\delta _{v,v'}v$  for all
$v,v'\in E^0$.

(E1) \, \, \hskip.3cm  $s(e)e=er(e)=e$ for all $e\in E^1$.

(E2) \, \, \hskip.3cm $r(e)e^*=e^*s(e)=e^*$ for all
$e\in E^1$.

(CK1)\hskip.5cm $e^*e'=\delta _{e,e'}r(e)$ for all $e,e'\in
E^1$.

(CK2)\hskip.5cm  $v=\sum _{\{ e\in E^1\mid s(e)=v \}}ee^*$ for
every $v\in E^0$ that emits edges.

The Leavitt path algebra $L_K(E)=P_K(E)\Sigma_1^{-1}$ is the algebra
generated by $\{v\mid v\in E^0\}\cup \{e,e^*\mid e\in E ^1\}$
subject to the relations (V)--(CK2) above, see \cite{AAS}. By \cite[Theorem 4.2]{AB},
the algebra $Q_K(E)$ is a von Neumann regular hereditary ring and
$Q_K(E)=P_K(E)(\Sigma \cup \Sigma_1)^{-1}$. Here the set $\Sigma$ can be
clearly replaced with the set of all square matrices of the form
$I_n+B$ with $B\in M_n(P_K(E))$ satisfying $\epsilon (B)=0$, for all
$n\ge 1$.

\section{The results}
\label{sect:results}

Our graphs are shaped by a poset with a special property, as follows: 

\begin{defi}
 \label{poset-be-a-tree}
 Let $(I,\le )$ be a poset. We say that $I$ is a {\it tree} in case there is a maximum element $i_0\in I$ and for every
 $i\in I$ the interval $[i,i_0] : = \{ j\in I \mid i\le j\le i_0 \}$ is a chain. The element $i_0$ will be called the {\it root} of the tree $I$. 
\end{defi}

Let $\le $ be a pre-order on $E^0$ such that $w\ge v$ whenever 
there exists a path in $E$ from $w$ to $v$. Let $I$ be the
antisymmetrization of $E^0$, endowed with the partial order $\le $ induced by
the pre-order on $E^0$. Thus, denoting by $[v]$ the class of $v\in E^0$
in $I$, we have $[v]\le [w]$ if and only if $v\le w$. 

We will assume throughout this section that $E$ is a finite quiver such that $I:= E/{\sim}$ is a tree.

For $v\in E^0$, we refer to the set $[v]$ as the \emph{component}
of $v$, and we will denote by $E[v]$ the restriction of $E$ to $[v]$, that is, the graph with 
$E[v]^0= [v]$ and $E[v]^1= \{e\in E^1\mid s(e)\in [v] \text{ and } r(e)\in [v] \}$.

We will assume henceforth that we are given a family $\mathbf{K} =
\{ K_i \}_{i\in I}$ of fields such that $K_j \subseteq K_i$ if $i\le
j$. We refer to this family as a {\it poset of fields} (over $I$). 
We will define a certain $K_0$-algebra $Q_{\mathbf{K}}(E)$,
where $K_0:=K_{[v_0]}$.

Let $i,j\in I$, and suppose that $k\in (I\downarrow i)  \cap
(I\downarrow j)$. Then, since $[k,i_0]$ is  a chain, we must have
$i\le j$ or $j\le i$. Therefore, if $i$ and $j$ are incomparable
elements of $I$ then $(I\downarrow i)  \cap (I\downarrow j) =
\emptyset$. It follows that if $J$ is a lower subset of $I$, and
$x_1,\dots , x_t$ are the maximal elements of $J$, then $J=
\bigsqcup_{i=1}^t (I\downarrow x_i )$.

Given a lower subset $J$ of $I$ we consider the set of vertices
$$E^0_J = \{ v\in E^0 \mid [v]\in J \}.$$
Observe that $E^0_J$ is a hereditary subset of $E^0$. We will denote
by $E_J$ the restriction graph $E|_{E_J^0}$ corresponding to the
hereditary subset $E_J^0$ of $E^0$, and we set $p_J:= \sum_{v\in
E_J^0} v$. 

We retain the above notation. We first consider an algebra of power
series $P_{\mathbf{K}}((E))$. The algebra $P_{\mathbf{K}}((E))$ is
defined as the algebra of formal power series of the form $a=
\sum_{\gamma \in \Path (E)} a_{\gamma} \gamma $, where each
$a_{\gamma}\in K_{[r(\gamma )]}$. The usual multiplication of formal
power series gives a structure of algebra over $K_0$ on
$P_{\mathbf{K}}((E))$. Observe that if $a_{\gamma}\in K_{[r(\gamma
)]}$, $b_{\mu}\in K_{[r(\mu )]}$, and $r(\gamma ) = s(\mu)$, then
since $[r(\gamma )]\ge [r(\mu )]$, we have that
$$a_{\gamma } b_{\mu} \in K_{[r(\mu)]}=K_{[r(\gamma \mu)]},$$ because
$K_{[r(\gamma )]}\subseteq K_{[r(\mu )]}$.

The path algebra $P_{\mathbf{K}}(E)$ is defined as the subalgebra of
$P_{\mathbf{K}}((E))$ consisting of all the series in
$P_{\mathbf{K}}((E))$ having finite support.

For $i\in I$, let $\mathbf{K}_i$ be the system of fields defined
over $I\downarrow i$ by $\mathbf{K}_i = \{ K_j \}_{j\le i}$. Then
one may define the algebras $P_{\mathbf{K}_i} ((E_{I\downarrow i}))$
and $P_{\mathbf{K}_i} (E_{I\downarrow i})$.  Note that for $i\in I$
we have
$$P_{\mathbf{K}_i}(E_{I\downarrow i}) = P_{K_i}(E_{I\downarrow i}) + \Big( \bigoplus_{j\in \rL(I,i)}
P_{K_i}(E_{I\downarrow i})P_{\mathbf{K}_j}(E_{I\downarrow j})
\Big) .$$

We consider the commutative algebra $\mathcal E :=\bigoplus_{i\in
I}\bigoplus_{v\in i} K_iv$, which is a subalgebra of
$P_{\mathbf{K}}(E)$. Note that there is an augmentation map
$\epsilon \colon P_{\mathbf{K}}((E))\to \mathcal E $, and that a
square  matrix $A\in M_n (P_{\mathbf{K}}((E)))$ is invertible if and
only if $\epsilon (A)$ is invertible in $M_n(\mathcal E)$ (see e.g.
the proof of \cite[Lemma 1.8]{AB}).

Recall from Section \ref{sect:preldefs} the definition and basic properties of the algebra $P^{{\rm rat}}_K(E)$
of rational series, for a field $K$. We are now ready to define the algebra of rational series in our
setting. Let $E$ and $\mathbf{K}= \{ K_i \}_{i\in I}$ be as above. We will 
define inductively $P^{\rm rat}_J(E)$ for any lower subset $J$ of
$I$.

We first set
$$ P^{\rm rat} _{S} (E) = \bigoplus _{i\in S} P^{\rm rat}_{K_i} (E_{\{ i
\}}),$$
for any non-empty subset $S$ of minimal elements of $I$. 
Now assume that $J$ is a non-empty lower subset of $E$ and
that we have defined the algebras $P^{\rm rat}_{J''}(E)$ for all
lower subsets $J''$ of $J$. Assuming that $J\ne E^0$, let $i$ be 
a minimal element in $E^0\setminus J$. We now
define $P^{\rm rat}_{J'}(E)$, where $J' = J\cup \{i \}$, as
$$P^{\rm rat}_{J'}(E)= P^{\rm rat}_{K_{i}}(E_{I\downarrow i}) +
P^{\rm rat}_{K_{i}}(E_{I\downarrow i})P^{\rm rat}_J(E) + P^{\rm
rat}_J(E).$$ 
This defines inductively (and unambiguously) $P^{{\rm rat}}_J(E)$ for any non-empty lower subset $J$ of $I$. 
Note that if $i_1,\dots ,i_r$ are the maximal elements of $J$ then 
$$P^{\rm rat}_J(E) = \bigoplus_{k=1}^r
P^{\rm rat}_{I\downarrow i_k}(E).$$

We now define $P^{\rm rat}_{\mathbf{K}}(E):= P^{\rm rat}_{E^0}(E)$.
Observe that $P^{\rm rat}_{\mathbf{K}}(E)$ is a $K_0$-subalgebra of
$P_{\mathbf{K}}((E))$.

The following generalizes \cite[Theorem 1.20]{AB}.

\begin{theor}
\label{rat-locali} Let $E$ and $\mathbf{K}= \{ K_i \}_{i\in I}$ be
as above. Let $\Sigma$ denote the set of matrices over
$P_{\mathbf{K}}(E)$ that are sent to invertible matrices by
$\epsilon$. Then $P^{\rm rat}_{\mathbf{K}}(E)$ is the rational
closure of $P_{\mathbf{K}}(E)$ in $P_{\mathbf{K}}((E))$, and the
natural map $P_{\mathbf{K}}(E)\Sigma ^{-1}\to P^{\rm
rat}_{\mathbf{K}}(E)$ is an isomorphism.
\end{theor}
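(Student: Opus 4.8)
The plan is to prove the two assertions (rational closure description and the universal localization isomorphism) simultaneously by induction on $|I|$, following the architecture of the proof of \cite[Theorem 1.20]{AB} but carrying the poset of fields through each step. First I would establish the base case: if $I$ consists only of minimal elements, then $P_{\mathbf K}(E)$ is the direct sum $\bigoplus_{i\in I}P_{K_i}(E_{\{i\}})$ over the connected components (no arrows run between distinct components since $v\ge w$ forces $[v]\ge[w]$), and both statements reduce to the classical case \cite[Theorem 1.20]{AB} applied componentwise, because $\Sigma$, $\epsilon$, and the rational closure all respect the direct-sum decomposition.

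For the inductive step, pick a minimal element $i$ of $E^0\setminus J$ where $J$ is a lower subset with $J'=J\cup\{i\}$, and suppose the result holds for $J$ and for $I\downarrow i'$ for every $i'<i$ — in particular for $P^{\rm rat}_J(E)$ and for the "sub-algebra over $I\downarrow i$". The key structural identities are already recorded in the excerpt: $P_{\mathbf K_i}(E_{I\downarrow i}) = P_{K_i}(E_{I\downarrow i}) + \bigoplus_{j\in\rL(I,i)}P_{K_i}(E_{I\downarrow i})P_{\mathbf K_j}(E_{I\downarrow j})$, and the analogous formula defining $P^{\rm rat}_{J'}(E)$. I would argue that $P^{\rm rat}_{J'}(E)$ is closed under the relevant inverses in two stages: (a) inside $P^{\rm rat}_{K_i}(E_{I\downarrow i})$, which is handled by the classical theorem over the single field $K_i$ applied to the graph $E_{I\downarrow i}$; and (b) for elements of the form $1+n$ with $n$ in the augmentation-kernel of $P^{\rm rat}_{J'}(E)$, decompose $n$ along the triangular structure $P^{\rm rat}_{J'}=P^{\rm rat}_{K_i}(E_{I\downarrow i})\oplus(\text{part hitting }J)$ and invert via a geometric-series / Peirce-corner argument using the idempotent $p_J$ (note $1-p_J$ is essentially the "new part" at level $i$). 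Concretely, writing $1+n$ in block form with respect to $p_J$, one inverts the corner over $P^{\rm rat}_J(E)$ by induction and the complementary corner over $P^{\rm rat}_{K_i}(E_{I\downarrow i})$ by the classical result, then assembles the full inverse by the standard Schur-complement formula, checking each entry lands back in $P^{\rm rat}_{J'}(E)$ using the defining sum decomposition. This shows $P^{\rm rat}_{\mathbf K}(E)$ is rationally closed; since it clearly contains $P_{\mathbf K}(E)$ and every element is visibly built from $P_{\mathbf K}(E)$ by the operations allowed in a rational closure, it equals the rational closure.

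For the universal localization statement, recall Cramer's rule gives a surjection $P_{\mathbf K}(E)\Sigma^{-1}\twoheadrightarrow P^{\rm rat}_{\mathbf K}(E)$ onto the rational closure (the rational closure is always a $\Sigma$-inverting overring, hence receives the universal localization map, and it is generated by entries of inverses of matrices in $\Sigma$, so the map is onto). For injectivity I would invoke the criterion that the universal localization map into a rationally closed subring is an isomorphism provided the subring is "$\Sigma$-rational" in the sense that every matrix in $\Sigma$ becomes invertible there and the ring has no "extra" relations — here the cleanest route is again induction: the map $P_{\mathbf K}(E)\Sigma^{-1}\to P^{\rm rat}_{\mathbf K}(E)$ restricts compatibly along the filtration by lower subsets, and on each graded piece one gets the classical isomorphism over the appropriate $K_i$; a diagram chase with the triangular decompositions then upgrades this to injectivity globally. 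The main obstacle I expect is step (b) above: making the block-inversion argument genuinely work requires that the off-diagonal corners, which live in $P^{\rm rat}_{K_i}(E_{I\downarrow i})P^{\rm rat}_J(E)$, behave well under the Schur complement — i.e. that products and the relevant inverse products stay inside the prescribed sum $P^{\rm rat}_{K_i}(E_{I\downarrow i})+P^{\rm rat}_{K_i}(E_{I\downarrow i})P^{\rm rat}_J(E)+P^{\rm rat}_J(E)$ — and this is exactly where the tree hypothesis on $I$ and the nesting $K_i\subseteq K_j$ for $j\le i$ are used, so I would isolate that compatibility as a separate lemma before assembling the induction.
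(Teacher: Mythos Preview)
Your outline for identifying $P^{\rm rat}_{\mathbf K}(E)$ with the rational closure is essentially the paper's argument, just organized around a Schur-complement language rather than the factorization $(I-B)^{-1}=(I-B_1)^{-1}(I-B_2)^{-1}$ coming from $B_2B_1=0$; these are equivalent here, so that part is fine.

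The genuine gap is in your treatment of injectivity of $\phi\colon P_{\mathbf K}(E)\Sigma^{-1}\to P^{\rm rat}_{\mathbf K}(E)$. You write that the map ``restricts compatibly along the filtration by lower subsets'' and that a diagram chase then gives injectivity, but the universal localization $R\Sigma^{-1}$ does \emph{not} carry an a priori filtration by lower subsets: nothing you have said rules out extra elements in $p_H R\Sigma^{-1}$ that are not visible from $P_{\mathbf K_H}(E_H)\Sigma(\epsilon_{\mathbf K_H})^{-1}$. Establishing exactly this is a nontrivial step (the paper isolates it as a separate claim), and it requires going back to the canonical form $x=\lambda(I-B)^{-1}\rho$ and manipulating $p_Hx$ via the decomposition $B=B_1+B_2$, $B_2=B_2'+B_2''$ to force everything into the corner. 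Your ``graded piece'' heuristic does not substitute for this computation.

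Even after that, there is a second missing ingredient you have not identified: once one shows that any $x\in\ker\phi$ satisfies $x(1-p_H)=0$ and $p_Hx=0$, the remaining piece $(1-p_H)xp_H$ lies in $\sum_i P_K^{\rm rat}(E/H)\,e_i\,P^{\rm rat}_{\mathbf K_H}(E_H)$ where the $e_i$ are the crossing edges, and one must argue that $\phi$ is injective on this bimodule-like piece. This is done by a concrete $K$-linear-independence argument inside the power series algebra $P_{\mathbf K}((E))$ (if $b_1,\dots,b_m\in r(e)P_{\mathbf K_H}((E_H))$ are $K$-linearly independent and $\sum a_ieb_i=0$, then every $a_ie=0$). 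That lemma is what finally kills the kernel, and nothing in your sketch points toward it; a ``diagram chase with triangular decompositions'' will not produce it.
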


\begin{proof}
We will prove by induction that $P^{\rm rat}_{I\downarrow i} (E)$ is
the rational closure of $P_{\mathbf{K}_i} (E_i)$ in
$P_{\mathbf{K}_i}((E_i))$, where $E_i:=E_{I\downarrow i}$ is the
restriction graph of $E$ to $E^0_{I\downarrow i}$. Let $i_1,\dots
,i_r$ be the set of maximal elements of $(I\downarrow i) \setminus
\{ i \}$, and assume that the result is known for the graphs
$E_{i_k}= E_{I\downarrow i_k}$, for $k=1,\dots , r$. Changing notation we may
furthermore assume that $i=i_0$ is the maximal element of $I$.

Therefore, we have to show that 
$$S:= P_{\mathbf K}^{{\rm rat}} (E) = P_{K_0}^{\rm rat}(E)+ P_{K_0}^{\rm
rat}(E)\Big( \bigoplus _{k=1}^r P_{\mathbf{K}_{i_k}}^{\rm rat}(E_{i_k})\Big)$$ is the rational
closure of $R:=P_{\mathbf{K}}(E)$ in $P_{\mathbf{K}} (( E ))$.  Write $\mathcal R$ for this rational
closure, and recall that we are assuming that
each $P_{\mathbf{K}_{i_k}}^{\rm rat}(E_{i_k})$ is the rational closure of
$P_{\mathbf{K}_{i_k}}(E_{i_k})$ in $P_{\mathbf{K}_{i_k}}((E_{i_k}))$.

It is convenient to introduce some additional notation. Let $H$ be the hereditary subset of $E^0$ generated by $i_1,\dots , i_k$, i.e.,
$H= \bigsqcup_{k=1}^r E^0_{I\downarrow i_k}$. Recall also the notation $p_H= \sum_{v\in H} v$. Also, to somewhat simplify the
notation we set $K:=K_0$.

We start by showing that $S\subseteq \mathcal R$. Since $P^{\rm
rat}_{K}(E)$ is the rational closure of $P_K(E)$ inside $P_K((E))$ (\cite[Theorem 1.20]{AB}), we
see that $P^{\rm rat}_K(E)\subseteq \mathcal R$. Also, note that the
algebra $p_H\mathcal R=p_H\mathcal R p_H$ is rationally closed in
$p_HP_{\mathbf{K}_H} ((E_H))$ and contains $p_HP_{\mathbf{K}_H} (E_H)$, so it must contain the
rational closure of $p_HP_{\mathbf{K}_H} (E_H)$ in $p_HP_{\mathbf{K}_H} ((E_H))$ which is
$p_HP_{\mathbf{K}_H}^{\rm rat}(E_H)$ by the induction hypothesis. It follows that $P^{\rm
rat}_{K}(E)$ and $p_HP^{\rm rat}_{\mathbf{K}_H} (E_H)$ are both contained in
$\mathcal R$. Since $\mathcal R$ is a ring, we get $S\subseteq
\mathcal R$.

To show the reverse inclusion $\mathcal R \subseteq S$, take any
element $a$ in $\mathcal R$. By \cite[Theorem 7.1.2]{free}, there exist a row $\lambda\in {^nR}$, a
column $\rho\in R^n$ and a matrix $B\in M_n(R)$ with $\epsilon
(B)=0$ such that
\begin{equation}
\label{equ:5.3} a=\lambda (I-B)^{-1}\rho .
\end{equation}
Now the matrix $B$ can be written as $B=B_1+B_2$, where $B_1\in
M_n(P_{K_0}(E))\subseteq M_n(R)$ and $B_2\in M_n(R)$ satisfy that $\epsilon
(B_1)=\epsilon (B_2)=0$, all the entries of $B_1$ are supported on
paths ending in $E^0\setminus H= [v_0] $ and all the entries of $B_2$ are
supported on paths ending in $H$. Note that, since $H$ is
hereditary, this implies that all the paths in the support of the
entries of $B_1$ start in $E^0\setminus H$ and thus $B_2B_1=0$. It
follows that
\begin{equation}
\label{Binverse}
(I-B)^{-1}=(I-B_1-B_2)^{-1}=(I-B_1)^{-1}(I-B_2)^{-1},
\end{equation}
and therefore
$(I-B)^{-1}=(I-B_1)^{-1}+(I-B_1)^{-1}B_2(I-B_2)^{-1}\in M_n(S).$ It
follows from (\ref{equ:5.3}) that $a\in S$, as desired. 

Since the set $\Sigma$ is precisely the set of square matrices over
$R$ which are invertible over $P_{\mathbf{K}}((E))$, we get from a well-known
general result (see for instance \cite[Lemma 10.35(3)]{Luck}) that
there is a surjective $K$-algebra homomorphism $\phi \colon
R{\Sigma}^{-1}\to \mathcal R$.

The rest of the proof is devoted to show that $\phi $ is injective.
We have a commutative diagram

\begin{equation}
\begin{CD}
P_K(E)\Sigma(\epsilon_K )^{-1} @>>> R\Sigma^ {-1}  \\
@V{\phi_K}V{\cong}V  @V{\phi}VV    \\
P_K^{\text{rat}}(E) @>>> \mathcal R
\end{CD}
\end{equation}
where the map $\phi_K$ is an isomorphism by \cite[Theorem 1.20]{AB}.
The map $P_K(E)\Sigma(\epsilon_K )^{-1} \linebreak
\to  \mathcal R $ is injective, so the map $P_K(E)\Sigma(\epsilon_K )^{-1}
\to R\Sigma^ {-1}  $ must also be injective. Hence the
$K$-subalgebra of $R\Sigma^{-1}$ generated by $P_K(E)$ and the
entries of the inverses of matrices in $\Sigma (\epsilon_K )$ is
isomorphic to $P_K^{\text{rat}}(E)$. Observe that we can replace
$\Sigma$ by the set of matrices of the form $I-B$, where $B$ is a
square matrix over $R$ with $\epsilon (B)=0$. 
An element $x$ in
$R\Sigma^{-1}$ is of the form
\begin{equation}
\label{can-form} x=\lambda (I-B)^{-1} \rho
\end{equation}
with $\lambda \in {^n R}$ and $\rho \in R^ n$,  and $B\in M_n(R)$ satisfies $\epsilon
(B)=0$.

\medskip

\noindent {\it Claim 1.}  We have
$$p_HR\Sigma ^{-1} \cong p_HP_{\mathbf{K}_H}^{\text{rat}}(E_H)\cong  P_{\mathbf{K}_H}(E_H)\Sigma (\epsilon _{\mathbf{K}_H})^{-1} .$$

\noindent {\it Proof of Claim 1.}
Observe first that we have a natural homomorphism $$\psi \colon P_{\mathbf{K}_H}(E_H)\Sigma (\epsilon _{\mathbf{K}_H})^{-1}\longrightarrow p_HR\Sigma^{-1}.$$
By the induction hypothesis, the composition of $\psi$ with the map $\phi$ is injective, because
it coincides with the canonical map from $P_{\mathbf{K}_H}(E_H)\Sigma (\epsilon_{\mathbf{K}_H})^{-1}$ onto \linebreak
$p_HP_{\mathbf{K}_H}^{\text{rat}}(E_H)\cong
P_{\mathbf{K}_H}^{\text{rat}}(E_H)$. Hence $\phi$ induces an isomorphism from 
$$\mathfrak S := \psi (P_{\mathbf{K}_H}(E_H)\Sigma (\epsilon _{\mathbf{K}_H}^H)^{-1})$$
onto $p_HP_{\mathbf{K}_H}^{\text{rat}}(E_H)$. Note that $\mathfrak S$ is precisely the subalgebra of
$p_HR\Sigma^{-1}$ generated by $p_HP_{\mathbf{K}_H}(E_H)$ and the entries of the
inverses of matrices of the form $p_HI-B$, with $B$ a square matrix
over $p_HP_{\mathbf{K}_H}(E_H)$  with $\epsilon (B)=0$. For an element $x$ in
$R\Sigma ^{-1}$, we write it in its canonical form (\ref{can-form})
and we write $B=B_1+B_2$ with all the entries in $B_1$ ending in
$E^0\setminus H$ and all the entries of $B_2$ ending in $H$.

Now multiply (\ref{can-form}) on the left by $p_H$ and use (\ref{Binverse}) to get
\begin{align*}
p_Hx & =p_H\lambda (I-B_1)^{-1}\rho +p_H\lambda (I-B_1)^{ -1}B_2(I-B_2)^{-1}\rho   \\
& = p_H\lambda p_H(I-B_1)^{-1}\rho + p_H \lambda p_H(I-B_1)^{ -1}B_2(I-B_2)^{-1}\rho \\
& = p_H \lambda p_H \rho + p_H \lambda p_H B_2 p_H (I-B_2)^{-1} \rho.
\end{align*}

 Write $B_2=B_2'+B_2''$, where all the entries of $B_2'$ start  in $E^0\setminus H$ and all the entries in $B_2''$ start
 in $H$ (and so end in $H$ as well).
 Note that $(I-B_2')^{-1}=I+B_2'$, because $B_2'^ 2=0$, so that $p_H(I-B_2')^{-1} =p_H$. Since $B_2''B_2'=0$
 we have  $(I-B_2)^{-1}=(I-B_2')^{-1}(I-B_2'')^{-1}$,
 and thus
 $$p_Hx=p_H\lambda p_H\rho p_H + p_H\lambda p_H B_2 p_H (I-B_2'')^{-1} p_H\rho p_H .$$
 It follows that $p_Hx\in \mathfrak S$, and so $p_HR\Sigma^{-1} = \mathfrak S$, as wanted. \qed

 Assume now that $x\in \ker (R\Sigma ^{-1} \to \mathcal R )= \ker ( R\Sigma ^{-1} \to
 P_{\mathbf{K}}^{{\rm rat}}(E))$ and write $x$ as in (\ref{can-form}), with $B=B_1+B_2$ as before.
 Then
\begin{equation}
\label{express1} x=\lambda (I-B_1)^{-1}\rho + \lambda (I-B_1)^{-1}
B_2(I-B_2)^{-1} \rho .
\end{equation}
Multiplying on the right by $1-p_H$, we get
 $$x(1-p_H)=\lambda (I-B_1)^{-1}\rho (1-p_H) = \lambda (1-p_H)(I-B_1)^{-1}\rho (1-p_H)  \in P_K(E)\Sigma (\epsilon _K)^{-1}$$
 and $0=\phi (x(1-p_H))=\phi_K(x(1-p_H))$.  Since $\phi _K$ is an isomorphism, we get $x(1-p_H)=0$.

Hence we have
\begin{equation}
\label{express2} x=  \lambda (I-B_1)^{ -1} \rho_2 +  \lambda
(I-B_1)^{-1}B_2(I-B_2)^{-1}\rho_2   ,
\end{equation}
 where $\rho=\rho_1+\rho_2$ with $\rho_1$ ending in $E^0\setminus H$ and $\rho_2$ ending in $H$.
 By Claim 1 we have $p_Hx=0$, because $\phi $ is an isomorphism when restricted to $p_HR\Sigma^{-1}$.
 Now we are going to find a suitable expression for $x=(1-p_H)xp_H$.
 Write $\lambda =\lambda _1+\lambda _2$ with $\lambda _1=(1-p_H)\lambda $ and $\lambda _2=p_H\lambda $.
  Then
  \begin{equation}
 \label{express3}
(1-p_H) \lambda (I-B_1)^{-1}\rho_2 =\lambda _1 (I-B_1)^{ -1}\rho _2
.
 \end{equation}
 Similarly $(1-p_H)\lambda (I-B_1)^{-1}B_2(I-B_2)^{-1}\rho_2 =\lambda _1 (I-B_1)^{-1} B_2(I-B_2)^{-1}\rho_2 $.
 Write $B_2=B_2'+B_2''$, with $B_2'$ starting in $E^0\setminus H$ and $B_2''$ starting in $H$. Then $B_2''B_2'=0$ and $(I-B_2)^{-1}=(I-B_2')^{-1}(I-B_2'')^{-1}$,
 so that
  \begin{align}
 \label{express4}
\notag &  (1-p_H)\lambda (I-B_1)^{-1}B_2(I-B_2)^{-1}\rho_2 =\lambda _1 (I-B_1)^{-1} B_2(I-B_2)^{-1}\rho_2 \\
  &  = \lambda_1 (I-B_1)^{-1}B_2(I+B_2')(I-B_2'')^{-1}\rho_2 \\
 \notag & =  \lambda_1 (I-B_1)^{-1}B_2(I-B_2'')^{-1}\rho_2.
   \end{align}
From (\ref{express3}),  (\ref{express4}) and (\ref{express2}) we get
   \begin{equation}
\label{express5} x=  (1-p_H)xp_H=  \lambda_1 (I-B_1)^{ -1} \rho_2+
\lambda_1(I-B_1)^{-1}B_2  (I-B_2'')^{-1}\rho_2   .
\end{equation}
It follows that $x\in \sum _{i=1}^k P_K^{\text{rat}}
(E/H)e_iP_{\mathbf{K}_H}^{\text{rat}} (E_H)$, where $e_1,\dots ,e_k$ is the
family of {\it crossing edges}, that is, the family of edges $e\in
E^1$ such that $s(e)\in E^0\setminus H$ and $r(e)\in H$. Write
$x=\sum _{i=1}^k \sum _{j=1}^{m_i} a_{ij}e_ib_{ij} $ for certain
$a_{ij}\in P_K^{\text{rat}} (E/H)$ and $b_{ij}\in P_{\mathbf{K}_H}^{\text{rat}}
(E_H)$. Then we have
$$0=\phi (x)= \sum _{i=1}^k \sum _{j=1}^{m_i} a_{ij}e_ib_{ij} ,$$
this element being now in $P_{\mathbf{K}}((E))$. Clearly this implies that $
\sum _{j=1}^{m_i} a_{ij}e_ib_{ij}=0$ in $P_{\mathbf{K}}((E))$ for all $i=
1,\dots ,k$. So the result follows from the following claim:

\medskip

\noindent  {\it Claim 2.}  Let $e$ be a crossing edge, so that
$s(e)\in E^0\setminus H$ and $r(e)\in H$. Assume that $b_1,\dots
,b_m\in r(e) P_{\mathbf{K}_H}((E_H))$ are $K$-linearly independent elements,
and assume that $a_1e,\dots , a_me$ are not all $0$, where
$a_1,\dots ,a_m\in P_K((E\setminus H))$. Then $\sum _{i=1}^m a_ieb_i
\ne 0$ in $P_{\mathbf{K}}((E))$.

\noindent {\it Proof of Claim 2.} By way of contradiction, suppose
that $\sum _{i=1}^m a_ieb_i = 0$. We may assume that $a_1e\ne 0$.
Let $\gamma $ be a path in the support of $a_1$ such that $r(\gamma
)=s(e)$. For every path $\mu$ with $s(\mu )=r(e)$ we have that the
coefficient of $\gamma e \mu $ in $a_ieb_i$ is $a_i(\gamma)
b_i(\mu)$, so that $\sum _{i=1}^m a_i(\gamma )b_i(\mu) =0$ for every
$\mu$ such that $s(\mu)=r(e)$. Since every path in the support of
each $b_i$ starts with $r(e)$, we get that
$$\sum_{i=1}^m a_i(\gamma )b_i=0$$
with $a_1(\gamma)\ne 0$, which contradicts the linear independence
over $K$ of $b_1,\dots, b_m$. \qed

This concludes the proof of the theorem.
\end{proof}

Following \cite[Section 2]{AB}, we define, for $e\in E^1$, the
right transduction $\tilde{\delta}_e\colon P_{\mathbf{K}}((E))\to P_{\mathbf{K}}((E))$
corresponding to $e$ by $$\tilde{\delta_e} (\sum_{\alpha\in
E^*}\lambda_\alpha \alpha)=\sum_{\substack{\alpha\in
E^*\\s(\alpha)=r(e)}} \lambda_{e\alpha } \alpha.$$ Note that the left transductions are not
even defined on $P_{\mathbf{K}}((E))$ in general.

Observe that $R:=P_{\mathbf{K}}(E)$ is closed under
all the right transductions, i.e.  $\tilde{\delta}_e(R)\subseteq R$.
Some of
the proofs in \cite{AB} make use of the fact that the usual path
algebra $P_K(E)$ is closed under {\it left and right}
transductions. Fortunately we have been able to overcome the
potential problems arising from the failure of invariance of $R$
under left transductions by using alternative arguments.

We are now ready to get a description of the algebra
$Q_{\mathbf{K}}(E)$ as a universal localization of
the path algebra $P_{\mathbf{K}}(E)$.

 Write $R:=P_{\mathbf{K}}(E)$. For any $v\in E^0$ such that $s^{-1}(v)\neq\emptyset$ we put
 $s^{-1}(v)=\{e^v_1,\dotsc,e^v_{n_v}\}$, and we
consider the left $R$-module homomorphism
 \begin{align*}
  \mu_v\colon Rv&\longrightarrow \bigoplus_{i=1}^{n_v}Rr(e^v_i)\\
  r&\longmapsto\left(re^v_1,\dotsc,re^v_{n_v}\right)
 \end{align*}
 Write $\Sigma_1=\{\mu_v\mid v\in E^0,\,s^{-1}(v)\neq \emptyset\}$.

\begin{theor}
\label{hereditarymixed} Let $\mathbf{K}$ and $E$ be as before. Let $\Sigma$ denote
the set of matrices over $P_{\mathbf{K}}(E)$ that are
sent to invertible matrices by $\epsilon$ and let $\Sigma _1$ be the
set of maps defined above. Then we have that
$Q_{\mathbf{K}}(E):=
P_{\mathbf{K}}(E)(\Sigma \cup \Sigma _1)^{-1}$ is a
hereditary von
Neumann regular ring and all finitely generated projective
$Q_{\mathbf{K}}(E)$-modules are induced from $P^{{\rm
rat}}_{\mathbf{K}}(E)$. Moreover each element of $Q_{\mathbf{K}}(E)$ can be written as a
finite sum $$\sum _{\gamma \in \mathrm{Path}(E)} a_{\gamma }\gamma^*,$$
where $a_{\gamma}\in P_{\mathbf{K}}^{\rm rat} (E) r(\gamma )$.
\end{theor}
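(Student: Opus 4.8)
The plan is to mimic the structure of the proof of \cite[Theorem 4.2]{AB}, feeding in Theorem \ref{rat-locali} as the new ingredient that handles the poset‑of‑fields stratification, and to work inductively along the tree $I$ exactly as in the proof of Theorem \ref{rat-locali}. First I would establish the ``moreover'' normal‑form statement, since everything else is built on it. Set $R = P_{\mathbf K}(E)$ and $S = P^{\rm rat}_{\mathbf K}(E)$. I would show that the $K_0$‑subalgebra of $P_{\mathbf K}((E))\Sigma_1^{-1}$ generated by $S$ and the elements $e^*$ ($e\in E^1$) already consists of all finite sums $\sum_\gamma a_\gamma \gamma^*$ with $a_\gamma\in S\,r(\gamma)$, i.e.\ that this set is closed under multiplication. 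The key computational lemma is the ``straightening'' identity governing how a $\gamma^*$ absorbs an element of $S$: for $a\in S$ one has $\gamma^* a = \sum_\mu \widetilde{\delta}_{?}(\cdots)\,\mu^*$ obtained by iterating the relations (CK1), (CK2) together with the right transductions $\widetilde\delta_e$; here one crucially uses that $R$ — hence $S$, by Theorem \ref{rat-locali} — is closed under the right transductions $\widetilde\delta_e$, and the remark in the text that left transductions are not available is precisely why one must keep the $a_\gamma$ on the \emph{left} of $\gamma^*$. Since the coefficients $a_\gamma$ land in $K_{[r(\gamma)]}$‑linear combinations of paths ending at $r(\gamma)$, the field‑compatibility $K_{[r(\gamma)]}\subseteq K_{[r(\mu)]}$ along a path $\gamma\mu$ makes all these manipulations stay inside $S$. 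This shows $Q_{\mathbf K}(E)$ (defined as the universal localization) surjects onto this concrete algebra of ``polynomials in the $\gamma^*$ with rational coefficients'', and a dimension/uniqueness‑of‑normal‑form argument inside $P_{\mathbf K}((E))\Sigma_1^{-1}$ — the coefficient of each word $\gamma\mu^*$ is read off uniquely — gives that the surjection is an isomorphism, yielding the normal form.

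Next, for von Neumann regularity I would argue inductively over the tree, paralleling the Claim‑1/Claim‑2 decomposition of Theorem \ref{rat-locali}. Write $i_0$ for the root, let $i_1,\dots,i_r$ be the maximal elements of $I\setminus\{i_0\}$, let $H$ be the hereditary subset generated by the $i_k$, and $p_H=\sum_{v\in H}v$. By induction $p_H Q_{\mathbf K}(E) p_H \cong \bigoplus_k Q_{\mathbf K_{i_k}}(E_{i_k})$ is von Neumann regular (and hereditary), and $(1-p_H)Q_{\mathbf K}(E)(1-p_H)\cong Q_{K_0}(E/H)$ is von Neumann regular and hereditary by \cite[Theorem 4.2]{AB}. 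An element $x\in Q_{\mathbf K}(E)$ decomposes along the idempotent $p_H$ into four blocks; the diagonal blocks are regularized within the corner rings, and the off‑diagonal block $(1-p_H)xp_H$ is, by the normal‑form computation above (mirroring \eqref{express5}), a finite sum $\sum_{i,j} a_{ij}e_i b_{ij}$ supported on the crossing edges, with $a_{ij}\in P^{\rm rat}_{K_0}(E/H)$ and $b_{ij}\in P^{\rm rat}_{\mathbf K_H}(E_H)$. Then I would invoke the standard fact (as in \cite{AB}, ultimately \cite[Theorem 4.2]{AB} and \cite{vnrr}) that a ring with a full idempotent $p$ such that $pRp$ and $(1-p)R(1-p)$ are von Neumann regular and the ``corner'' bimodule behaves well is itself von Neumann regular; concretely one produces a quasi‑inverse for $x$ block by block, absorbing the crossing‑edge terms using that $e_i^* e_j = \delta_{ij}r(e_i)$ and that finitely generated one‑sided ideals generated by the $a_{ij}$ (resp.\ $b_{ij}$) are generated by idempotents in the respective regular corner rings.

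For hereditariness and the statement that every finitely generated projective $Q_{\mathbf K}(E)$‑module is induced from $P^{\rm rat}_{\mathbf K}(E)$, I would use Theorem \ref{rat-locali}: $Q_{\mathbf K}(E)=P^{\rm rat}_{\mathbf K}(E)\Sigma_1^{-1}$ is a universal localization of $P^{\rm rat}_{\mathbf K}(E)$ at the maps $\mu_v$. Universal localizations are homological epimorphisms in the relevant sense when the modules being inverted are maps between finitely generated projectives with the localization flat (here flatness follows from von Neumann regularity of $Q_{\mathbf K}(E)$ together with the fact that $\mu_v$ is injective in $P^{\rm rat}_{\mathbf K}(E)$, which one checks inside $P_{\mathbf K}((E))$), so that $Q_{\mathbf K}(E)\otimes_{P^{\rm rat}_{\mathbf K}(E)} -$ preserves projective resolutions; combined with $\mathrm{gldim}\, P^{\rm rat}_{\mathbf K}(E)\le 1$ — itself proved inductively along the tree, as $P^{\rm rat}_{\mathbf K}(E)$ is built from the hereditary rings $P^{\rm rat}_{K_i}(E_{\{i\}})$ by the recollement‑type extensions — this gives that $Q_{\mathbf K}(E)$ is hereditary and that every finitely generated projective is induced, i.e.\ of the form $Q_{\mathbf K}(E)\otimes_{P^{\rm rat}_{\mathbf K}(E)} P$.

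\emph{Main obstacle.} The crux is the straightening identity for $\gamma^* a$ with $a\in P^{\rm rat}_{\mathbf K}(E)$ and, relatedly, the verification that the concrete algebra of finite sums $\sum a_\gamma\gamma^*$ is closed under multiplication — this is exactly where the asymmetry flagged in the paper (no left transductions on $P_{\mathbf K}((E))$) bites, so one cannot simply quote \cite{AB} and must rearrange the computation to keep rational coefficients on the left while tracking that they stay in the correct subfield $K_{[r(\gamma)]}$ at every step. A secondary technical point is making the induction along the tree interact cleanly with the block decomposition along $p_H$: one must check that the corner isomorphisms $p_HQ_{\mathbf K}(E)p_H\cong\bigoplus_k Q_{\mathbf K_{i_k}}(E_{i_k})$ and $(1-p_H)Q_{\mathbf K}(E)(1-p_H)\cong Q_{K_0}(E/H)$ are compatible with the localizations, which should follow from Claim 1 of Theorem \ref{rat-locali} applied after inverting $\Sigma_1$ as well, but requires care about which matrices one is inverting on each side.
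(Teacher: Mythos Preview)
Your route diverges substantially from the paper's, which is far shorter and leans almost entirely on two external results rather than a tree induction. The paper observes that $P_{\mathbf K}(E)$ is hereditary (iterated use of Bergman's coproduct theorem), so by Theorem~\ref{rat-locali} and the Bergman--Dicks theorem (\cite{BD}: any universal localization of a hereditary ring is hereditary) both $P^{\rm rat}_{\mathbf K}(E)$ and $Q_{\mathbf K}(E)=P^{\rm rat}_{\mathbf K}(E)\Sigma_1^{-1}$ are hereditary. Then the paper checks that $P^{\rm rat}_{\mathbf K}(E)$ is (i) inversion closed in $P_{\mathbf K}((E))$, (ii) closed under the right transductions $\tilde\delta_e$ (with a mildly modified $\tau_e$), and (iii) hereditary; these are exactly the hypotheses that make the \emph{proof} of \cite[Theorem~2.16]{AB} go through, yielding von Neumann regularity and that all f.g.\ projectives are induced. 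The normal form is then just a citation of \cite[Proposition~2.5(ii)]{AB}. The only genuinely new wrinkle, flagged in the paper's Remark, is that invariance under \emph{left} transductions fails; in \cite{AB} that invariance was used solely to get semihereditariness, and Bergman--Dicks supplies hereditariness outright, so the gap closes.

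Your proposal has two soft spots. First, the hereditariness step: you infer flatness of $Q_{\mathbf K}(E)$ over $P^{\rm rat}_{\mathbf K}(E)$ from von Neumann regularity of $Q_{\mathbf K}(E)$, but regularity of $Q$ says nothing about $Q$ as a $P^{\rm rat}$-module; universal localizations are typically \emph{not} flat, and you would need an independent argument (in fact Bergman--Dicks is precisely the tool that avoids any flatness hypothesis). Second, the block argument for regularity is not a ``standard fact'': having $p_HQp_H$ and $(1-p_H)Q(1-p_H)$ regular does not force $Q$ to be regular (upper-triangular $2\times 2$ matrices over a field give the obvious counterexample), so the whole weight falls on your crossing-edge absorption, which you only sketch. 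That sketch can probably be made to work, but it amounts to redoing by hand what \cite[Theorem~2.16]{AB} already packages; you gain nothing over simply verifying the three hypotheses (i)--(iii) above and invoking that proof.
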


\begin{proof} First observe that $P_{\mathbf{K}}(E)$
is a hereditary ring and that
$\mon{P_{\mathbf{K}}(E)}=(\mathbb Z^+)^d$, where
$|E^0|=d$. This follows by successive use of \cite[Theorem
5.3]{Bergman}.           

In order to get that the right transduction $\tilde{\delta}_e\colon
P_{\mathbf{K}}((E))\to P_{\mathbf{K}}((E))$ corresponding to $e$ is a right
$\tau_e$-derivation, that is,
\begin{equation}
\label{right-der} \tilde{\delta}_e
(rs)=\tilde{\delta}_e(r)s+\tau_e(r)\tilde{\delta}_e(s)
\end{equation}
for all $r,s\in P_{\mathbf{K}}((E))$, we have to modify
slightly the definition of $\tau _e$ given in \cite[page 220]{AB}.
Concretely we define $\tau _e$ as the endomorphism of $P_{\mathbf{K}}((E))$
given by the composition
$$P_{\mathbf{K}}((E))\to \prod_{v\in E^0} K_{[v]}v\to \prod_{v\in E^0} K_{[v]}v\to P_{\mathbf{K}}((E)),$$
where the first map is the augmentation homomorphism, the third map is the canonical inclusion,
and the middle map is the $K_0$-lineal map
given by sending $s(e)$ to $r(e)$, and any other idempotent
$v$ with $v\ne s(e)$ to $0$. Observe that this restricts to an
endomorphism of $P_{\mathbf{K}}(E)$ and that the
proof in \cite[Lemma 2.4]{AB} gives the desired formula
(\ref{right-der}) for $r,s\in P_{\mathbf{K}}((E))$ and, in particular for
$r,s\in P_{\mathbf{K}}(E)$. 

Note that, by the argument given after the proof of Proposition 2.7 in \cite{AB}, the algebras
$P^{\rm rat}_{\mathbf{K}}(E)$ are stable under all the right transductions. Hence, the constructions in
\cite[Section 2]{AB} apply to $R:=P^{\rm
rat}_{\mathbf{K}}(E)$ (with some minor changes), and
we get that $R\Sigma_1^{-1}=R\langle \ol{E};
\tau,\tilde{\delta}\rangle /I$, where $I$ is the ideal of $R\langle
\ol{E}; \tau,\tilde{\delta}\rangle$ generated by the idempotents
$q_v:=v-\sum _{e\in s^{-1}(v)}ee^*$ for $v\notin \Si (E)$.

By Theorem \ref{rat-locali}, we have
\begin{align*}
Q_{\mathbf{K}}(E) & = P_{\mathbf{K}}(E)(\Sigma \cup \Sigma_1)^{-1}
= (P_{\mathbf{K}}(E)\Sigma^{-1}) \Sigma_1^{-1} \\ 
 & = P^{{\rm rat}}_{\mathbf{K}} (E) \Sigma_1^{-1}
= (P^{{\rm rat}}_{\mathbf{K}}(E))\langle \ol{E};\tau ,\tilde{\delta}\rangle/I.
 \end{align*}
By a result of Bergman and Dicks \cite{BD} any universal localization
of a hereditary ring is hereditary, thus we get that both
$P^{\text{rat}}_{\mathbf{K}}(E)$ and
$Q_{\mathbf{K}}(E)$ are hereditary rings. Since
$P^{\text{rat}}_{\mathbf{K}}(E)$ is hereditary,
closed under inversion in $P_{\mathbf{K}}((E))$ (by Theorem
\ref{rat-locali}), and closed under all the right transductions
$\tilde{\delta}_e$, for $e\in E^1$, the proof of \cite[Theorem
2.16]{AB} gives that $Q_{\mathbf{K}}(E)$ is von
Neumann regular and that every finitely generated projective module is
induced from $P^{{\rm rat}}_{\mathbf{K}}(E)$.

The last statement follows from \cite[Proposition 2.5(ii)]{AB}. This concludes the proof of the theorem.
\end{proof}

\begin{remark}
\label{no-invneed} Theorem 2.16 in \cite{AB} is stated for a
subalgebra $R$ of $P_K((E))$ which is closed under all left and
right transductions (and which is inversion closed in $P_K((E))$).
However the invariance under right transductions is only used in the
proof of that result to ensure that the ring $R$ is left
semihereditary. Since we are using the opposite notation concerning
(CK1) and (CK2), the above hypothesis translates in our setting into
the condition that $P_{\mathbf{K}}(E)$ and $P^{{\rm
rat}}_{\mathbf{K}}(E)$ should be invariant under all
{\it left} transductions, which is not true in general as we
observed above. We overcome this problem by the use of the result of
Bergman and Dicks (\cite{BD}), which guarantees that $
P_{\mathbf{K}}(E)$  and $P^{{\rm
rat}}_{\mathbf{K}}(E)$ are indeed right and left
hereditary (see the proof of Theorem \ref{hereditarymixed}).
\end{remark}

For our last result, we need an auxiliary lemma.

\begin{lem}
\label{lem:Poset-system}
Let $(I,\le )$ be a finite poset which is a tree, with maximum element $i_0$.
Let $\mathbf{K}=\{ K_i \}_{i\in I}$ be a poset
of fields, so that $K_i\subseteq K_j$ if $j\le i $ in $I$. Then there exists a field $K$ and an embedding of
$\mathbf{K}$ into the constant system $K$ over $I$, i.e., there is a collection of field morphisms $\varphi _i\colon K_i\to K$
such that $\varphi _i|_{K_j} = \varphi _j$ for all $i\le j$.
\end{lem}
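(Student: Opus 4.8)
The strategy is to build the field $K$ by transfinite recursion along the tree structure of $I$, amalgamating the fields $K_i$ over their common subfields as one descends from the root. Since $I$ is a finite tree with maximum $i_0$, I would proceed by induction on $|I|$, or equivalently by repeatedly amalgamating at the leaves. Here is the inductive step. Suppose $|I| > 1$ and let $i$ be a minimal element of $I$; write $J = I \setminus \{i\}$, which is again a finite tree with the same root $i_0$ (removing a minimal element of a tree leaves a tree, since the maximum is still present and every interval $[j, i_0]$ for $j \in J$ is unchanged). Let $j^*$ be the unique lower cover of $i$ in... wait — $i$ is minimal, so instead let $j^* = \min\{ j \in I : j > i \}$ be the element covering $i$ from above; this exists and is unique because $[i, i_0]$ is a chain. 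By the induction hypothesis applied to $\mathbf{K}|_J$, there is a field $K'$ with compatible embeddings $\psi_j \colon K_j \to K'$ for $j \in J$.

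Now I need to extend this to $K_i$. We have $K_{j^*} \subseteq K_i$ (since $j^* > i$ means $i \le j^*$, so by the poset-of-fields condition $K_{j^*} \subseteq K_i$ — careful: the hypothesis says $K_i \subseteq K_j$ when $j \le i$, so $K_i \subseteq K_{j^*}$? No: $i \le j^*$ gives $j^* \ge i$, and "$K_a \subseteq K_b$ if $b \le a$" with $b = j^*, a = i$ needs $j^* \le i$, false; with $b = i, a = j^*$ needs $i \le j^*$, true, giving $K_{j^*} \subseteq K_i$). Good, so $K_{j^*} \subseteq K_i$, and also $K_{j^*} \hookrightarrow K'$ via $\psi_{j^*}$. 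Form the field of fractions $K$ of the domain... more precisely, take $K$ to be any field admitting embeddings of both $K_i$ and $K'$ that agree on $K_{j^*}$: such a field exists because one can embed the tensor product $K_i \otimes_{K_{j^*}} K'$ (after killing a maximal ideal, it is a domain since $K'$ and $K_i$ are fields over the field $K_{j^*}$ — actually one should take $(K_i \otimes_{K_{j^*}} K')/\mathfrak{m}$ for a maximal ideal $\mathfrak{m}$, then pass to its fraction field, or simply note that any two field extensions of a common field embed into a common field, e.g. into an algebraically closed field containing one of them). Set $\varphi_i \colon K_i \to K$ the given embedding and $\varphi_j = (\text{embedding } K' \hookrightarrow K) \circ \psi_j$ for $j \in J$. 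The compatibility $\varphi_i|_{K_{j^*}} = \varphi_{j^*}$ holds by construction at $j^*$, and for a general pair $i \le j$ with $j \in J$, one has $i \le j^* \le j$ (again because $[i,i_0]$ is a chain and $j \ge i$ forces $j \ge j^*$ or $j = i$), so $K_i \supseteq K_{j^*} \supseteq K_j$ and the needed identity $\varphi_i|_{K_j} = \varphi_j$ reduces to $\varphi_{j^*}|_{K_j} = \psi_j|_{K_j}$ composed with the fixed inclusion $K' \hookrightarrow K$, which holds by the induction hypothesis. Pairs entirely inside $J$ are handled by induction.

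**Main obstacle.** The only real subtlety is the amalgamation step: given $K_{j^*} \subseteq K_i$ and $K_{j^*} \hookrightarrow K'$, producing a single field receiving both compatibly. This is standard — two extension fields of a common base field always admit a common over-field (embed $K' $ into an algebraic closure $\overline{K'}$, then $K_i$ sits over $K_{j^*} \subseteq \overline{K'}$ and can be embedded over $K_{j^*}$ into a sufficiently large algebraically closed field, or use the tensor-product-mod-maximal-ideal argument). One must be slightly careful that this amalgamation does not disturb the already-constructed embeddings of the other fields $K_j$, $j \in J$ — but since they all factor through the fixed embedding $K' \hookrightarrow K$, post-composing uniformly keeps all compatibilities intact. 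I would also remark at the start that finiteness of $I$ lets us avoid any transfinite machinery; a plain induction on $|I|$ suffices. I would write up the details as follows, but the above is the content.
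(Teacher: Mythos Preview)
Your argument is correct. Both your proof and the paper's rest on the same core fact---that any two field extensions of a common subfield embed compatibly into a common over-field---and both proceed by induction along the tree, but the inductions run in opposite directions. The paper inducts on the \emph{depth} of $I$: it looks at the lower covers $a_1,\dots,a_n$ of the root $i_0$, applies the inductive hypothesis separately to each subtree $I\downarrow a_t$ to get fields $L_t$, and then amalgamates all of the $L_t$ simultaneously over $K_{i_0}$. You instead induct on $|I|$ and peel off a single minimal element $i$, amalgamating $K_i$ with the field already built for $I\setminus\{i\}$ over $K_{j^*}$, where $j^*$ is the unique upper cover of $i$. Your version has the mild advantage that each inductive step requires only a single two-field amalgamation, whereas the paper's step implicitly iterates several; on the other hand the paper's decomposition into the subtrees below the root mirrors more closely how the algebras $P^{\rm rat}_{\mathbf K}(E)$ are assembled elsewhere in the paper. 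Either way the verification of compatibility uses the chain property of $[i,i_0]$ in exactly the way you indicate.
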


\begin{proof}
For a finite poset $I$ as in the statement, define the {\it depth} of $I$ as the maximum of the lengths of maximal chains
$i_k < i_{k-1} < \cdots < i_0 $.  We prove the result by induction on the depth of $I$. If the depth of $I$ is $0$, there is
nothing to prove. Assume the result is true for finite posets of depth at most $r$ satisfying the hypothesis in the statement and let $(I,\le )$
such a finite poset of depth $r+1$, where $r\ge 0$. Write $\rL (I,i_0)= \{ a_1,\dots , a_n \}$. Then the depth of each of the posets
$I\downarrow a_t$ is at most $r$, so that
there are embeddings $\psi _t\colon \mathbf{K}_{I\downarrow a_t} \to L_t$ for some fields $L_t$, for $t=1,\dots , r$.
This means that there are field embeddings $\psi _{t,i}\colon K_i \to L_t$ for each $i\le a_t$, such that $\psi _{t,i}|_{K_j}=
\psi _{t,j}$ for $i\le j \le a_t$. Now, setting $K_0:= K_{i_0}$, there exist a field $K$ and embeddings
$\delta_t \colon  L_t \to K$ for $t=1,\dots ,n$ such that $\delta _t\circ ((\psi_{t,a_t})|_{K_0}) = \delta _{t'}\circ (\psi_{t',a_{t'}})|_{K_0})$ for $1\le t,t'\le n$.
Define $\varphi _i\colon K_i \to K$ to be $\delta_{t}\circ \psi _{t,i}$ if $i\le a_t$.
(Observe that this is well-defined by our hypothesis
on $I$.) Finally define $\varphi _{i_0} \colon K_0\to K$ as $\varphi_{i_0}:= \delta _t\circ ((\psi_{t,a_t})|_{K_0})$, which is independent of
the choice of $t$ by the above.

If $i\le j \le a_t$ for some $t$, then
$$\varphi_i|_{K_j} = (\delta_t \circ \psi_{t,i})|_{K_j} =  \delta_t \circ (\psi_{t,i}|_{K_j})= \delta_t\circ \psi_{t,j} = \varphi _j.$$
If $i<i_0$, then there is a unique $t\in \{1,\dots , n\}$ such that $i \le a_t < i_0$, and so
$$\varphi _i|_{K_0} = (\delta_t \circ \psi_{t,i})|_{K_0} = \delta _t \circ (\psi_{t,i}|_{K_{a_t}})|_{K_0} =
\delta_t \circ (\psi_{t,a_t})|_{K_0} = \varphi _{i_0} .$$
This completes the proof.
\end{proof}

Define the Leavitt path algebra
$L_{\mathbf{K}}(E)$ associated to the poset of fields $\mathbf K$ and the finite quiver $E$ as the universal localization of
$P_{\mathbf{K}}(E)$ with respect to the set
$\Sigma_1$. Let $M(E)$ be the abelian monoid with generators $E^0$
and relations given by $v=\sum _{e\in s^{-1}(v)} r(e)$, see
\cite{AMP} and \cite[Chapter 3]{AAS}.

\begin{theor}
\label{isomorphs} With the above notation, we have natural
isomorphisms
$$M(E)\cong  \mon{L_{\mathbf{K}}(E)}\cong
\mon{Q_{\mathbf{K}}(E)}.$$
\end{theor}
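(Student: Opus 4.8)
The plan is to prove the two isomorphisms separately, exploiting the universal localization structure established in the previous theorems. For the isomorphism $\mon{L_{\mathbf{K}}(E)}\cong \mon{Q_{\mathbf{K}}(E)}$, I would first observe that $Q_{\mathbf{K}}(E)$ is obtained from $L_{\mathbf{K}}(E)$ by a further universal localization. More precisely, by Theorem~\ref{hereditarymixed} we have $Q_{\mathbf{K}}(E) = P_{\mathbf{K}}(E)(\Sigma\cup\Sigma_1)^{-1}$, while $L_{\mathbf{K}}(E) = P_{\mathbf{K}}(E)\Sigma_1^{-1}$; hence $Q_{\mathbf{K}}(E) = L_{\mathbf{K}}(E)\Sigma^{-1}$ where $\Sigma$ (or, equivalently, the set of matrices $I-B$ with $\epsilon(B)=0$) becomes a set of matrices over $L_{\mathbf{K}}(E)$ whose images are already invertible. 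The content of Theorem~\ref{hereditarymixed} is that every finitely generated projective $Q_{\mathbf{K}}(E)$-module is induced from $P^{\rm rat}_{\mathbf{K}}(E)$, and in particular from $L_{\mathbf{K}}(E)$; this says exactly that the natural map $\mon{L_{\mathbf{K}}(E)}\to\mon{Q_{\mathbf{K}}(E)}$ is surjective. For injectivity, the standard argument is that a universal localization $R\to R\Sigma^{-1}$ which is an honest localization at perfectly invertible matrices (so that the matrices in $\Sigma$ become invertible, not merely stably so) induces an isomorphism on $V(-)$ when additionally every f.g.\ projective is induced; one checks that two f.g.\ projective $L_{\mathbf{K}}(E)$-modules that become isomorphic after tensoring with $Q_{\mathbf{K}}(E)$ were already isomorphic, using that $L_{\mathbf{K}}(E)\hookrightarrow Q_{\mathbf{K}}(E)$ (which follows from the injectivity statements in Theorem~\ref{rat-locali} and the Bergman--Dicks machinery) and a trace/conductor argument as in \cite{AB}.

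For the isomorphism $M(E)\cong \mon{L_{\mathbf{K}}(E)}$, the approach is to reduce to the classical case of a single field by using Lemma~\ref{lem:Poset-system}. The lemma gives a field $K$ and compatible embeddings $\varphi_i\colon K_i\to K$, which induce a homomorphism of $K_0$-algebras $P_{\mathbf{K}}(E)\to P_K(E)$ and hence, by functoriality of universal localization, a homomorphism $L_{\mathbf{K}}(E)\to L_K(E)$. This map sends the idempotent $v\in E^0$ to $v$ and respects the structural relations $v=\sum_{e\in s^{-1}(v)}r(e)$. On the other side, the monoid $M(E)$ is the universal monoid on $E^0$ subject to exactly these relations, so there is a canonical map $M(E)\to \mon{L_{\mathbf{K}}(E)}$, $[v]\mapsto [L_{\mathbf{K}}(E)v]$. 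To show it is an isomorphism I would use the commutative square relating $M(E)$, $\mon{L_{\mathbf{K}}(E)}$, $\mon{L_K(E)}$, and the classical result (from \cite{AMP}, \cite{AAS}, \cite{AB}) that $M(E)\cong\mon{L_K(E)}$ via $[v]\mapsto[L_K(E)v]$. Composition $M(E)\to\mon{L_{\mathbf{K}}(E)}\to\mon{L_K(E)}$ is the classical isomorphism, so $M(E)\to\mon{L_{\mathbf{K}}(E)}$ is injective; surjectivity follows because $\mon{P_{\mathbf{K}}(E)}=(\mathbb{Z}^+)^d$ is generated by the classes $[P_{\mathbf{K}}(E)v]$ (shown at the start of the proof of Theorem~\ref{hereditarymixed}) and universal localization is surjective on $V(-)$ modulo the relations introduced by $\Sigma_1$, which are precisely the defining relations of $M(E)$.

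Alternatively, and perhaps more cleanly, one can prove $M(E)\cong\mon{L_{\mathbf{K}}(E)}$ directly by an inductive argument over the tree $I$, in the spirit of the proof of Theorem~\ref{rat-locali}: for a minimal $i$, $L_{\mathbf{K}_i}(E_{\{i\}})=L_{K_i}(E_{\{i\}})$ and the classical result applies; for the inductive step, one analyzes how $\mon{-}$ behaves under the pushout-type decomposition $P^{\rm rat}_{J'}(E)=P^{\rm rat}_{K_i}(E_{I\downarrow i})+P^{\rm rat}_{K_i}(E_{I\downarrow i})P^{\rm rat}_J(E)+P^{\rm rat}_J(E)$, using that crossing edges only go from the new top component into the hereditary part $H$ and invoking the description of elements of $Q_{\mathbf{K}}(E)$ from Theorem~\ref{hereditarymixed}. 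Either way, I expect the main obstacle to be the \emph{injectivity} half of $M(E)\to\mon{L_{\mathbf{K}}(E)}$: surjectivity is essentially formal from the generator-and-relation presentations, but ruling out unexpected identifications among the $[L_{\mathbf{K}}(E)v]$ genuinely requires comparison with a field via Lemma~\ref{lem:Poset-system} (or a careful induction), since a priori the mixed-coefficient algebra could have more projectives becoming isomorphic. The compatibility of the embeddings $\varphi_i$ with the inclusions $K_i\subseteq K_j$ is exactly what makes the reduction to $L_K(E)$ functorial, so Lemma~\ref{lem:Poset-system} is the crucial input here.
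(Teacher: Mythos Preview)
Your overall strategy is close to the paper's, but the two halves are swapped relative to how the paper actually argues, and one of your halves has a real gap.

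For $M(E)\cong \mon{L_{\mathbf{K}}(E)}$, the paper does \emph{not} use Lemma~\ref{lem:Poset-system} at all: it simply invokes Bergman's coproduct theorems \cite{Bergman} as in \cite[Theorem~3.5]{AMP}. The point is that $P_{\mathbf{K}}(E)$ is built from the semisimple ring $\mathcal E=\bigoplus_{v}K_{[v]}v$ by adjoining the arrows (a sequence of coproduct/bimodule constructions), and the passage to $L_{\mathbf{K}}(E)$ is the universal inversion of the maps $\mu_v$; Bergman's machinery computes $\mon{-}$ through each of these steps and gives $M(E)$ directly, with no need to compare to a single field. Your Poset-system route for this half is correct, but it is more circuitous than necessary.

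Where the paper \emph{does} use Lemma~\ref{lem:Poset-system} is exactly where you instead appeal to a ``trace/conductor argument'': the injectivity of $M(E)\to\mon{Q_{\mathbf{K}}(E)}$. Your claim that an injective universal localization $R\hookrightarrow R\Sigma^{-1}$ with all finitely generated projectives induced automatically yields injectivity on $\mon{-}$ is not a standard fact, and no such argument appears in \cite{AB}; in general, two projectives over $R$ can become isomorphic over $R\Sigma^{-1}$ without being isomorphic over $R$, even under these hypotheses. This is the genuine gap. The paper's fix is precisely the trick you used on the $L$ side: Lemma~\ref{lem:Poset-system} produces a field $K$ and a ring map $Q_{\mathbf{K}}(E)\to Q_K(E)$, and one factors
\[
M(E)\cong \mon{Q_{K_0}(E)}\longrightarrow \mon{Q_{\mathbf{K}}(E)}\longrightarrow \mon{Q_K(E)}\cong M(E),
\]
the composite being the identity on $M(E)$ by \cite[Theorem~4.2]{AB}; injectivity of the first arrow follows. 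So your instinct that Lemma~\ref{lem:Poset-system} is the crucial input is right, but it should be deployed for $Q_{\mathbf{K}}(E)$, not for $L_{\mathbf{K}}(E)$, and it replaces (rather than supplements) the unproven ``trace/conductor'' step.
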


\begin{proof}
The proof that $M(E)\cong \mon{L_{\mathbf{K}}(E)}$
follows from Bergman's results \cite{Bergman}, as in
\cite[Theorem 3.5]{AMP}.

Note that $R:=P^{\rm rat}_{\mathbf{K}}(E)$ is
semiperfect. Thus we get $\mon{R}\cong (\Z^+)^{|E_0|}$ in the
natural way, that is the generators of $\mon{R}$ correspond to the
projective modules $Rv$ for $v\in E^0$. By Theorem
\ref{hereditarymixed}, we get that the natural map $M(E)\to
\mon{Q_{\mathbf{K}}(E)}$ is surjective. To show
injectivity, let $K$ and $\{ \varphi_i \}_{i\in I}$ be a field and maps satisfying the hypothesis of Lemma \ref{lem:Poset-system}.
Then it is easily seen that there is
a unital ring homomorphism $\varphi \colon Q_{\mathbf{K}}(E) \to Q_K(E)$. Indeed, one directly verifies that there
is well-defined homomorphism  $\varphi \colon P_{\mathbf{K}}(E)\to P_K(E)$ given by the rule
$$\varphi (\sum a_{\gamma} \gamma ) = \sum \varphi _{[r(\gamma )]}(a_{\gamma}) \gamma .$$
Let $\Sigma $ (respectively $\Sigma_K$) be the set of all square matrices over $P_{\mathbf{K}}(E)$ (respectively over $P_K(E)$)
which are sent to invertible matrices through the augmentation map $\epsilon$. Since $\epsilon (\varphi (A))= \varphi (\epsilon (A))$, it is obvious that
$\varphi (\Sigma ) \subseteq \Sigma_K$, and consequently the map $\varphi$ can be uniquely extended to a homomorphism (also denoted by $\varphi$)
from $Q_{\mathbf{K}}(E) = P_{\mathbf{K}}(E)(\Sigma \cup \Sigma_1)^{-1}$ to $Q_K(E)=P_K(E)(\Sigma_K\cup \Sigma_1)^{-1}$.
By \cite[Theorem 4.2]{AB}, we have $M(E)\cong \mon{Q_{K_0}(E)}$ and $M(E)\cong \mon{Q_K(E)}$ canonically, so that we get  
\begin{equation*}
 M(E)\cong \mon{Q_{K_0}(E)}\longrightarrow
\mon{Q_{\mathbf{K}}(E)}\longrightarrow
\mon{Q_{K}(E)}\cong M(E) ,
\end{equation*}
and  the composition of the maps above is the identity. It
follows that the map $M(E)\to
\mon{Q_{\mathbf{K}}(E)}$ is injective and so it
must be a monoid isomorphism.
\end{proof}

We adopt now the specific setting of \cite{ABP} for our graphs in order to obtain a property of $Q_{\mathbf K}(E)$ needed in \cite{ABP}. 
So in addition to our previous assumptions on $E$, we make the following requirements:

\begin{enumerate}
\item There is a partition $I=I_{{\rm free}}\bigsqcup I_{{\rm reg}}$ (where $I=E/{\sim}$). 
\item For each $v\in E^0$ such that $[v]$ is not minimal in $I$ and
$[v]\in I_{{\rm free}}$, we have that $E[v]$ is a graph with a single vertex $v$ and a single arrow $\alpha^v$,
with $s(\alpha^v)= r(\alpha^v)= v$.
\item For each $v\in E^0$ such that $[v]\in I_{{\rm reg}}$, we have that $E[v]$ is a graph with at least two edges.
\item  If $[v]$ is minimal in $I$, then either $v$ is a sink or
$[v]\in I_{{\rm reg}}$.
\end{enumerate}

With these conditions at hand, we finally observe that we can obtain 
$Q_{\mathbf{K}}(E)$ by inverting a smaller set of matrices over $P_{\mathbf{K}}(E)$.
For each non-minimal $[v]\in \Ifree$, let
$$\Sigma_i := \{ p(\alpha^v) \mid p(x) \in K_{[v]}[x] \text{ and } p(0)\ne 0 \}.$$
be the set of polynomials in $\alpha^v$ with coefficients in $K_{[v]}$ with nonzero constant term.
Set $p_i =\sum_{v\in i} v$ for $i\in I$. 
For each $i\in \Ireg$, let $\Sigma_i$ be the set of square matrices $M$ over
$p_iP_{K_i}(E)p_i$ such that $\epsilon_i(M)$ is an invertible matrix over $\prod_{v\in i} K_i$, where $p_iP_{K_i}(E)p_i\to
\prod_{v\in i} K_i$ is the augmentation homomorphism.
Finally set $\Sigma ' = \bigsqcup_{i\in I} \Sigma _i'$, where, for $i\in I$,  
$\Sigma_i'= \{ M+(1-p_i)I_m \mid M \in M_m(P_{\mathbf K} (E))\cap \Sigma_i \}$.

\begin{prop}
\label{prop:enough-inversion}
Let $E$ and $\Sigma, \Sigma'$ be as above. Then we have $Q_{\mathbf{K}}(E)= L_{\mathbf{K}}(E)(\Sigma')^{-1}$.
\end{prop}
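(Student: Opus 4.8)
The plan is to show that the two universal localizations $L_{\mathbf K}(E)(\Sigma')^{-1}$ and $Q_{\mathbf K}(E) = P_{\mathbf K}(E)(\Sigma\cup\Sigma_1)^{-1}$ coincide, by verifying that each localizing family maps to invertible matrices in the other localization, and then invoking the universal property. Since $L_{\mathbf K}(E) = P_{\mathbf K}(E)\Sigma_1^{-1}$ by definition, we have $L_{\mathbf K}(E)(\Sigma')^{-1} = P_{\mathbf K}(E)(\Sigma_1\cup\Sigma')^{-1}$, so the claim reduces to the equality of universal localizations $P_{\mathbf K}(E)(\Sigma_1\cup\Sigma')^{-1} = P_{\mathbf K}(E)(\Sigma_1\cup\Sigma)^{-1}$. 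One direction is immediate: every matrix in $\Sigma'$ lies in $\Sigma$ (up to the harmless $(1-p_i)I_m$ summand, which is already invertible modulo the $p_i$-corner and is killed appropriately by $\epsilon$), so $\Sigma'\subseteq\Sigma$ after identifying these, giving $P_{\mathbf K}(E)(\Sigma_1\cup\Sigma')^{-1}$ as a quotient of $Q_{\mathbf K}(E)$; concretely, there is a natural surjection $Q_{\mathbf K}(E)\to L_{\mathbf K}(E)(\Sigma')^{-1}$. The work is the reverse direction: showing every matrix in $\Sigma$ becomes invertible already in $L_{\mathbf K}(E)(\Sigma')^{-1}$.

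For the reverse direction I would argue by induction along the lower subsets of $I$, paralleling the inductive structure used in Theorem~\ref{rat-locali}. The key point is that a matrix $A$ over $P_{\mathbf K}(E)$ with $\epsilon(A)$ invertible can be reduced, after row/column operations over the already-inverted algebra, to a block-triangular form respecting the decomposition $E^0 = (E^0\setminus H)\sqcup H$ for $H$ a hereditary subset corresponding to a lower subset, exactly as in the $B=B_1+B_2$ decomposition in the proof of Theorem~\ref{rat-locali}. Inverting $A$ then reduces to inverting the two diagonal blocks: one supported on the top component $[v_0]$ and one supported on $H$; for the latter we use the induction hypothesis. For the top component, conditions (2) and (3) (together with (4) at the base case) are what make the small set $\Sigma_{[v_0]}$ suffice: if $[v_0]\in\Ireg$ then $E[v_0]$ is precisely the kind of finite graph for which $Q_{K_0}(E[v_0]) = L_{K_0}(E[v_0])(\Sigma_{[v_0]})^{-1}$ by the corresponding statement over a single field (this is essentially the content of \cite[Theorem 4.2]{AB} combined with the observation that over a graph component one only needs to invert the augmentation-invertible matrices there), while if $[v_0]\in\Ifree$ the component $E[v_0]$ is a single loop $\alpha^{v_0}$ and $P_{K_0}^{\rm rat}(E[v_0])$ is obtained from $K_0[\alpha^{v_0}]$ by inverting exactly the polynomials with nonzero constant term, i.e.\ the set $\Sigma_{[v_0]}$.

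More precisely, I expect the cleanest formulation to go through the rational-series algebra: by Theorem~\ref{rat-locali}, $P_{\mathbf K}^{\rm rat}(E) = P_{\mathbf K}(E)\Sigma^{-1}$, and $P_{\mathbf K}^{\rm rat}(E)$ was built inductively from the single-component rational algebras $P_{K_i}^{\rm rat}(E_{\{i\}})$ via the amalgamation formula $P^{\rm rat}_{J'}(E) = P^{\rm rat}_{K_i}(E_{I\downarrow i}) + P^{\rm rat}_{K_i}(E_{I\downarrow i})P^{\rm rat}_J(E) + P^{\rm rat}_J(E)$. So it suffices to check, component by component, that $P_{K_i}^{\rm rat}(E_{\{i\}})$ is already contained in (the image of) $L_{\mathbf K}(E)(\Sigma')^{-1}$, and this is exactly where the case distinction $i\in\Ifree$ versus $i\in\Ireg$ is used: in the free case one inverts a single loop's worth of polynomials ($\Sigma_i$), in the regular case one inverts the augmentation-invertible matrices over the corner $p_iP_{K_i}(E)p_i$ ($\Sigma_i$), and the enlargement to $\Sigma_i'$ just pads by the orthogonal idempotent $1-p_i$ so the matrices are genuinely over $P_{\mathbf K}(E)$. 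Then $L_{\mathbf K}(E)(\Sigma')^{-1}$ contains $P_{\mathbf K}^{\rm rat}(E)$ and all of $\Sigma_1$, hence contains $P_{\mathbf K}^{\rm rat}(E)\Sigma_1^{-1} = Q_{\mathbf K}(E)$ by Theorem~\ref{hereditarymixed}, giving the reverse surjection; since both composites are identities on generators, the two algebras are canonically isomorphic.

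The main obstacle I anticipate is the block-reduction step: showing that an arbitrary $A\in\Sigma$ can be brought, via elementary operations over the already-constructed localization, into a form whose invertibility is controlled purely by the per-component sets $\Sigma_i$. This requires care because $P_{\mathbf K}(E)$ is not invariant under left transductions (as the paper repeatedly warns), so one cannot freely manipulate matrices the way one would over a classical path algebra; the fix, as in the proof of Theorem~\ref{rat-locali}, is to do all manipulations inside $P_{\mathbf K}((E))$ or inside the already-inverted rational algebra, where $(I-B_1-B_2)^{-1} = (I-B_1)^{-1}(I-B_2)^{-1}$ holds because $B_2B_1=0$ by hereditarity of $H$. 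A secondary point to verify is that the single-field statement "$Q_{K}(E[v]) = L_K(E[v])(\Sigma_{[v]})^{-1}$ for a connected graph with no proper nontrivial hereditary subsets" is indeed available — this is where conditions (2)–(4) are essential, since they guarantee each component $E[v]$ is either a single loop or a "cofinal" regular piece, both of which are handled by the original single-field theory in \cite{AB}.
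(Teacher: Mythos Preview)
Your block-reduction approach is essentially the paper's proof: write $I-A$ with $\epsilon(A)=0$, peel off the top component by writing $A=A_0+B+\sum_k A_k$ (top-to-top, top-to-below, and below-to-below pieces), and use the nilpotency relations $(B+\sum A_k)A_0=0$, $(\sum A_k)B=0$ to factor $(I-A)^{-1}$ as a product of $(I-A_0)^{-1}$, $(I-B)^{-1}$, and $\prod_k(I-A_k)^{-1}$; then recurse. So the skeleton is right, but a few points need correction or simplification.

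First, you have the easy direction backwards. From $\Sigma'\subseteq\Sigma$ one gets a canonical map $L_{\mathbf K}(E)(\Sigma')^{-1}\to L_{\mathbf K}(E)\Sigma^{-1}=Q_{\mathbf K}(E)$, not a surjection the other way. The nontrivial direction is to show every matrix in $\Sigma$ is already invertible in $L_{\mathbf K}(E)(\Sigma')^{-1}$; once that is done the two universal properties give mutually inverse maps.

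Second, you are overcomplicating the per-component step. After the reduction, each $I-A'$ with $A'$ over $p_iP_{K_i}(E)p_i$ and $\epsilon(A')=0$ is, for $i\in\Ireg$, \emph{literally} an element of $\Sigma_i$ by definition, hence $I-A'+(1-p_i)I_m\in\Sigma_i'$ is inverted in $L_{\mathbf K}(E)(\Sigma')^{-1}$; no appeal to a single-field statement ``$Q_K(E[v])=L_K(E[v])(\Sigma_{[v]})^{-1}$'' is needed or used. For non-minimal $i\in\Ifree$ the component is a single loop, $p_iP_{K_i}(E)p_i\cong K_i[\alpha^v]$ is commutative, and the paper simply observes that $\det(Iv-A')=v+p(\alpha^v)$ with $p(0)=0$ lies in $\Sigma_i$, so $I-A'$ is invertible after inverting $\Sigma'$.

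Third, you should make explicit the fate of the crossing pieces: each $B'$ whose support consists of paths starting in a fixed component $i$ and ending strictly below satisfies $(B')^2=0$, so $(I-B')^{-1}=I+B'$ is already a matrix over $P_{\mathbf K}(E)$ and needs no further inversion. You gesture at this via the $B_2B_1=0$ identity but never state the conclusion.

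Finally, your ``alternative'' route through containment of $P_{\mathbf K}^{\rm rat}(E)$ and the worry about left transductions are both detours. The factorization identities $(I-B_1-B_2)^{-1}=(I-B_1)^{-1}(I-B_2)^{-1}$ are formal ring identities valid in any ring where both factors are invertible; no transductions enter, and one never needs to work inside $P_{\mathbf K}((E))$ for this proposition.
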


\begin{proof}
Since $Q_{\mathbf{K}}(E)= L_{\mathbf{K}}(E)\Sigma^{-1}$, we only have to show that every element of $\Sigma$ is invertible over
$L_{\mathbf{K}}(E)(\Sigma')^{-1}$. For this, it is enough to show that any matrix of the form $I-A$ is invertible over $L_{\mathbf{K}}(E)(\Sigma')^{-1}$,
where $A$ is a square matrix over $P_{\mathbf{K}}(E)$ such that $\epsilon (A)= 0$.

Now given such a matrix $A$, we can uniquely write it in the form
$$A= A_0+B+A_1+\dots + A_l,$$
where $\rL (I,i_0)=\{i_1,\dots, i_l \}$, $\epsilon (B)= \epsilon (A_k)=0$ for $k=0,\dots , l$,
$A_0$ is a square matrix over $p_{i_0}P_{\mathbf{K}}(E)p_{i_0}$, all paths in the support of $B$ start in a
vertex in the component of $i_0$ and end in a vertex in the component
of some $i\in I$, where $i<i_0$, and each $A_k$ is a square matrix over $P_{\mathbf{K}_{i_k}}(E_{i_k})$.
Here, $E_{i_k}$ is the hereditary subset $E_{I\downarrow i_k}$ and $\mathbf{K}_{i_k}$ is the restriction
of the poset of fields $\mathbf{K}$ to $I\downarrow i_k$.

Observe that $(B+\sum_{k=1}^l A_k)A_0= 0$, so that we obtain
$$(I-A)^{-1} = (I-A_0)^{-1}(I-(B+\sum_{k=1}^l A_k))^{-1}$$
in any ring where the matrices on the right hand side are invertible.
Now observe that $(\sum_{k=1}^l A_k)B=0$, because all the paths in the support of $B$ start at the component $i_0$, so we get
$$(I- (B+ \sum_{k=1}^l A_k))^{-1} = (I-B)^{-1}(I-\sum_{k=1}^l A_k)^{-1}$$
in any ring where the matrices on the right hand side are invertible.
Now since $$(I\downarrow i_k)\cap (I\downarrow i_{k'}) =\emptyset$$ for $k\ne k'$  we get
that
$$(I- \sum_{k=1}^l A_k)^{-1}= \prod_{k=1}^l (I-A_k)^{-1}$$
again in any ring where all matrices on the right hand side are invertible.

Continuing in this way, we obtain that the matrix $(I-A)^{-1}$ can be expressed as a product of matrices
of the forms $(I-A')^{-1}$, where $A'$ is a matrix over $p_iP_{K_i}(E)p_i$ such that $\epsilon (A')=0$, and $(I-B')^{-1}$, where $B'$ is a matrix
such that all paths in the support of $B'$ start at a vertex in a fixed component $i\in I$  and end at a component which is strictly
less than $i$. Now observe that $(B')^2= 0$ and so $(I-B')^{-1}= I+B'$ is a matrix over $P_{\mathbf{K}}(E)$. Therefore, since all the matrices
$I-A'$ and $I-B'$ as above are invertible in $L_{\mathbf{K}}(E)(\Sigma')^{-1}$, we see that $I-A$ is also invertible, and the proof is complete.
(Note that, in case $i=[v]\in \Ifree$ is non-minimal, the determinant of the matrix $Iv-A'$ is of the form $v+p(\alpha^v)$, where $p(0)=0$,
so the matrix $I-A'$ is invertible over $L_{\mathbf{K}}(E)(\Sigma')^{-1}$.)
\end{proof}

\end{document}